\newif\iffurther
\newtheorem{thm}{Theorem}[section]
\newtheorem{cor}[thm]{Corollary}
\newtheorem{lem}[thm]{Lemma}
\newtheorem{prop}[thm]{Proposition}
\theoremstyle{definition}
\newtheorem{defn}[thm]{Definition}
\theoremstyle{remark}
\newtheorem{rem}[thm]{Remark}
\newtheorem{exam}[thm]{Example}
\numberwithin{equation}{section}
\newcommand\Tref[1]{{Theorem~\ref{#1}}}
\newcommand\Pref[1]{{Proposition~\ref{#1}}}
\newcommand\Lref[1]{{Lemma~\ref{#1}}}
\newcommand\Eref[1]{{Example~\ref{#1}}}
\newcommand\Cref[1]{{Corollary~\ref{#1}}}
\newcommand\Rref[1]{{Remark~\ref{#1}}}
\newcommand\Sref[1]{{Section~\ref{#1}}}
\newcommand\eq[1]{{(\ref{#1})}}
\long\def\forget#1\forgotten{}
\newcommand{\mQ}{\mathbb{Q}}
\newcommand{\mC}{\mathbb{C}}
\newcommand{\Z}{\mathbb{Z}}
\newcommand{\F}{\mathbb{F}}
\def\s{{\sigma}}
\newcommand{\ra}{\rightarrow}
\newcommand\sg[1]{{\left<{#1}\right>}}
\newcommand\subjectto{{\,|\ }}
\newcommand\tensor[1][]{{\otimes_{#1}}}
\DeclareMathOperator\Gal{Gal}
\def\({\left(}
\def\){\right)}
\def\divides{{\,|\,}}
\def\co{{\,{:}\,}}
\newcommand\card[1]{{\left|{#1}\right|}}
\newcommand\dimcol[2]{{[{#1}\!:\!{#2}]}}
\newcommand\oline[1] {{\overline{#1}}}
\newcommand\Br{{\operatorname{Br}}}
\newcommand\tame[1]{{{#1}_{\operatorname{tr}}}}
\newcommand\res[1][{}] {{\operatorname{res}_{#1}}}
\newcommand\inv{{\operatorname{inv}}}
\newcommand\Aut{{\operatorname{Aut}}}
\newcommand\sub{{\,\subset\,}}
\renewcommand\Im{{\operatorname{Im}}}
\newcommand\GL[1][n] {{\operatorname{GL}_{#1}}}
\newcommand\ab{{\operatorname{\,ab}}}
\def\Casei{{(\ref{casei})}}       
\def\Caseii{{(\ref{caseii})}}     
\def\Caseiii{{(\ref{caseiii})}}   
\def\Caseiv{{(\ref{caseiv})}}     
\def\Casev{{(\ref{casev})}}       
\def\Casevi{{(\ref{casevi})}}     
\def\Casevii{{(\ref{casevii})}}   
\def\Caseviii{{(\ref{caseviii})}} 
\def\Casefinal{{(\ref{casevii})}} 
\def\Caseix{{(\ref{caseix})}}     
\def\Casenor{{(\ref{casenor})}} 
\def\final{{\ref{casevii}}} 
\renewcommand\nu{{v}}
\renewcommand\varpi{{w}}
\begin{document}

\title
{Fields of definition for admissible groups}%

\def\Tech{Deptartment of Mathematics, Technion-Israel Institute of Technology, Haifa 32000, Israel}

\author{ Danny Neftin}
\address{\Tech}
\email{neftind@tx.technion.ac.il}%

\author{ Uzi Vishne }

\def\BIU{Deptartment of Mathematics, Bar Ilan University, Ramat Gan 52900, Israel}
\address{\BIU}
\email{vishne@math.biu.ac.il}

\begin{abstract}
A finite group $G$ is admissible over a field $M$ if there is a
division algebra whose center is $M$ with a maximal subfield
$G$-Galois over $M$. We consider nine possible notions of being
admissible over $M$ with respect to a subfield $K$ of $M$, where
the division algebra, the maximal subfield or the Galois group are
asserted to be defined over $K$. We completely determine the
logical implications between all variants.
\end{abstract}

\maketitle

\section{Introduction}

A group $G$ is admissible over a field $M$ if there
is a $G$-crossed product $M$-division algebra, namely a division
algebra $D$ whose center is $M$ with a maximal subfield $L$ which
is Galois over $M$ with Galois group $G$.

Given a subfield $K$ of $M$ and a group $G$ which is admissible
over $M$, one may ask how well can the admissibility be realized over
$K$. For example, $G$ can be already $K$-admissible, with a
$G$-crossed product over $K$ which remains a division algebra
after scalar extension to $M$. Failing this strong assumption, it
is still possible that $G$ is both $K$ and $M$-admissible; that
the $G$-crossed product $D$ is defined over $K$ (namely, $D = D_0
\tensor[K] M$ for a suitable division algebra over $K$); that $L$
is defined and Galois over $K$ (namely $L = L_0 \tensor[K] M$
where $L_0/K$ is $G$-Galois); or that $L$ is merely defined over
$K$.

This paper studies nine variations of $M$-admissibility of a group
$G$, with respect to a fixed subfield $K$ of $M$. We provide a
complete diagram of implications between those conditions (see
\Sref{sec:cond}). Furthermore, we provide counterexamples to every
implication which is not proved before with $G$ being a $p$-group
and $M$ a number field (see \Sref{sec4}).

It turns out that for $G$ cyclic and $M$ a number field, eight of
the nine conditions are satisfied (see \Sref{sec:cyclic}).
We shall also consider tame admissibility which is the type of admissibility that is best understood (see e.g. \cite{PaperI}) 
and show that these eight variants coincide with respect to tame admissibility.

The difference between tame and wild admissibility is an essential ingredient in the construction of counterexamples in \Sref{sec4}.


\section{Conditions on the field of definition}\label{sec:cond}

\subsection{The nine variations}
Let $K$ be a field and $G$ a finite group. We shall say that a field $L$ is
\emph{$M$-adequate} if it is a maximal subfield in some division
algebra whose center is $K$. We shall say that $L$ is a $G$-\emph{extension} of $K$ if
$L/K$ is a Galois extension with Galois group $\Gal(L/K)\cong G$.

Let $M/K$ a finite extension. One way to study the condition
\begin{enumerate}
\item \label{casei}
$G$ is $M$-admissible
\end{enumerate}
is by refining it to require that the
crossed-product division algebra or its maximal subfield are defined over $K$ (we say that a field or an
algebra over $M$ is defined over $K$ if it is obtained by
 scalar extension from $K$ to $M$).

Condition \eq{casei} requires
the existence of an $M$-adequate $G$-extension $L/M$. Three ways
in which this field can be related to $K$ provide the following
variants:
\begin{enumerate}
\setcounter{enumi}{1} 
\item \label{casenor}
there is an $M$-adequate $G$-extension $L/M$ for which $L$ is Galois over $K$;
\item \label{caseii}
there is an $M$-adequate $G$-extension $L/M$ for which $L$
is defined over $K$;
\item \label{caseiii}
there is an $M$-adequate $G$-extension $L/M$ so that
$L=L_0\otimes M$ and $\Gal(L_0/K) \cong G$.
\end{enumerate}
For the algebra $D$ to be defined over $K$, we may require that:
\begin{enumerate}
\setcounter{enumi}{4} 
\item \label{caseiv}
there is a $K$-division algebra $D_0$ and a $G$-extension
$L/M$ for which $L$ is a maximal subfield of $D_0\otimes M$;
\item \label{casev}
there is a $K$-division algebra $D_0$ and a maximal subfield $L_0$
which is a $G$-extension of $K$ so that $L_0\cap M=K$ and $L=L_0M$
is a maximal subfield of the division algebra $D=D_0\otimes M$.
\end{enumerate}
If $L = L_0 \tensor[K] M$, the interaction between $L_0$ and $L$
may involve the division algebras:
\begin{enumerate}
\setcounter{enumi}{6} 
\item \label{casevi}
there is a $K$-adequate $G$-extension $L_0/K$ for which $L_0M$ is
an $M$-adequate $G$-extension;
\item \label{caseviii}
there is a $K$-adequate $G$-extension $L_0/K$ for which $L_0M$ is
an $M$-adequate $G$-extension.
\end{enumerate}
And finally we have the double condition
\begin{enumerate}
\setcounter{enumi}{8}
\item \label{casevii}
$G$ is both $K$-admissible and $M$-admissible.

\end{enumerate}

We provide a diagrammatic description of each condition, for easy
reference. Inclusion is denoted by a vertical line, and diagonal
lines show the extension of scalars from $K$ to $M$. A vertical
line is decorated by $G$ if the field extension is $G$-Galois.
Note that in some cases (\Caseiii, \Casev \, and \Casevi) the fact
that the extension $L_0/K$ is Galois implies the same condition on
the extension $L/M$.
\begin{equation*} \begin{array}{lcccc}
                    \xymatrix@R=16pt@C=16pt{    D \ar@{-}[d] & \\
                               L \ar@{-}[d]^{G} & \\
                               M  }
                     & \xymatrix@R=16pt@C=16pt{ & D \ar@{-}[d]  \\
                              & L \ar@{-}[d]^{G}\ar@{-}[ddl]_{\hat{G}} \\
                              & M \ar@{-}[dl] \\
                              K  }
                     & \xymatrix@R=16pt@C=16pt{ & D \ar@{-}[d]  \\
                              & L \ar@{-}[dl]\ar@{-}[d]^{G} \\
                              L_0 \ar@{-}[d] & M \ar@{-}[dl] \\
                              K  }
                   & \xymatrix@R=16pt@C=16pt{ & D \ar@{-}[d]  \\
                              & L \ar@{-}[dl]\ar@{-}[d] \\
                              L_0 \ar@{-}[d]_{G} & M \ar@{-}[dl] \\
                              K  }
                   & \xymatrix@R=16pt@C=16pt{ & D \ar@{-}[dl]\ar@{-}[d]  \\
                              D_0 \ar@{-}[dd] & L \ar@{-}[d]^{G} \\
                               &  M \ar@{-}[dl] \\
                              K }
                  \\
                   \Casei & \Casenor & \Caseii & \Caseiii & \Caseiv
                  \end{array}
\end{equation*}

\begin{equation*} \begin{array}{cccc}
                    \xymatrix@R=16pt@C=16pt{ & D \ar@{-}[dl]\ar@{-}[d]  \\
                              D_0 \ar@{-}[d] & L \ar@{-}[dl]\ar@{-}[d] \\
                              L_0 \ar@{-}[d]_{G} &  M \ar@{-}[dl] \\
                              K }
                   & \xymatrix@R=16pt@C=16pt{ & D \ar@{-}[d]  \\
                              D_0 \ar@{-}[d] & L \ar@{-}[dl]\ar@{-}[d] \\
                              L_0 \ar@{-}[d]_{G} &  M \ar@{-}[dl] \\
                              K  }
                   & \xymatrix@R=16pt@C=16pt{ & D \ar@{-}[d]  \\
                              D_0 \ar@{-}[d] & L \ar@{-}[dl]\ar@{-}[d]^{G} \\
                              L_0 \ar@{-}[d] &  M \ar@{-}[dl] \\
                              K }
                   & \xymatrix@R=16pt@C=16pt{ & D \ar@{-}[d]  \\
                              D_0 \ar@{-}[d] & L \ar@{-}[d]^{G} \\
                              L_0 \ar@{-}[d]_{G} &  M \ar@{-}[dl] \\
                              K  }
                  \\
                   \mbox{\Casev} & \mbox{\Casevi}
                    & \mbox{\Caseviii} & \mbox{\Casevii}
                  \end{array}
\end{equation*}

We shall say that a triple $(K,M,G)$ satisfies Condition (m) if
there are $L_0$, $L$, $D_0$ and $D$ as required in this condition.
In such case we shall also say $(L_0,L,D_0,D)$ realizes Condition
(m), omitting $L_0$ or $D_0$ if they are not needed.

\begin{rem}
Let $M/K$ be a finite extension of fields and $G$ a finite
group. One might also consider the condition
\begin{enumerate}
\setcounter{enumi}{9}
\item \label{caseix}
there is a $G$-crossed product $K$-division algebra $D_0$, for
which $D=D_0\otimes M$ is also a $G$-crossed product division
algebra.
\end{enumerate}
Here there is no explicit assumption that the maximal subfields be
related; in the spirit of previous diagrams, this condition is
described by
\begin{equation*}             \xymatrix@R=16pt@C=16pt{ & D \ar@{-}[dl]\ar@{-}[d]  \\
                              D_0 \ar@{-}[d] & L \ar@{-}[d]^{G} \\
                              L_0 \ar@{-}[d]_{G} &  M \ar@{-}[dl] \\
                              K}
\end{equation*}
However, \Caseix\ is equivalent to Condition \Casev. Indeed,
suppose that $(L_0,L,D_0,D)$ realizes \Caseix. Then $D$ is of index
$\card{G}$ and $D$ is also split by $L'=ML_0$. Therefore
$\dimcol{L'}{M}=\card{G}$, $L_0\cap M=K$ and hence we can take
$L'$ to be the required maximal $G$-subfield of $D$. Thus,
$(L_0,L',D_0,D)$ realizes $\Casev$. The converse implication is
obvious, taking $L = L_0 \tensor[K] M \sub D_0 \tensor[K] M = D$.
\end{rem}

\subsection{The logical implications}

The following theorem describes the relation between the nine
variants:
\begin{thm} \label{section1.1 - implications Thm}
Let $M/K$ be a finite extension of fields and $G$ a finite group.
Then the implications in Diagram \ref{section1.1 - implications
Diagram} hold, but no others.
\begin{equation}\label{section1.1 - implications Diagram}
\xymatrix@R=14pt@C=18pt{
    & \Casev \ar@{=>}[rd]\ar@{=>}[ddl] & &
\\
    &  &  \Casevi \ar@{=>}[dl]\ar@{=>}[dr]\ar@{=>}[d] &
\\
    \Caseiv \ar@{=>}[ddr] & \Casevii \ar@{=>}[dd] & \Caseviii \ar@{=>}[d] & \Caseiii \ar@{=>}[dl]
\\
    & &  \Caseii \ar@{=>}[ld]. &
\\
    & \Casei & \Casenor \ar@{=>}[l] &
}
\end{equation}
%
\end{thm}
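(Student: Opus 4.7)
The proof of \Tref{section1.1 - implications Thm} splits into two parts: verifying the arrows shown in Diagram~\ref{section1.1 - implications Diagram}, and producing counterexamples to every implication not shown. The latter forms the bulk of \Sref{sec4}; here I sketch the plan for the eleven positive implications, each of which is essentially a formal consequence of unwinding the definitions in \Sref{sec:cond}.

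The arrows naturally organize into three families. First, the \emph{pure weakenings} \Caseiii$\Rightarrow$\Caseii, \Caseii$\Rightarrow$\Casei, \Casenor$\Rightarrow$\Casei, \Casevii$\Rightarrow$\Casei, \Casevi$\Rightarrow$\Casevii, and \Casevi$\Rightarrow$\Caseviii\ each hold because the target condition drops hypotheses present in the source. For instance, \Caseiii\ explicitly provides $L=L_0\otimes_K M$, which is precisely the defining data of \Caseii; and \Casevi\ packages both a $K$-adequate $G$-extension $L_0/K$ and its base change to an $M$-adequate $G$-extension $L_0M/M$, which is exactly the pair of admissibility assertions required by \Casevii.

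The second family, \Casev$\Rightarrow$\Casevi, \Casev$\Rightarrow$\Caseiv, \Casevi$\Rightarrow$\Caseiii, and \Caseviii$\Rightarrow$\Caseii, passes data between $L_0/K$ and $L/M$ via the compositum $L=L_0M$. The key observation in each case is a degree count: once $L_0/K$ and $L_0M/M$ both have degree $\abs{G}$, the fields $L_0$ and $M$ are linearly disjoint over $K$, so $L=L_0\otimes_K M$, and Galois structure on either side transfers to the other. The remaining arrow \Caseiv$\Rightarrow$\Casei\ needs only the remark that in \Caseiv, the equality $[L:M]=\abs{G}$ matches the degree of $D_0\otimes_K M$, so the index of this algebra equals its degree and it is a division algebra, making $L$ an $M$-adequate $G$-extension.

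The real content of the theorem lies in the negative direction, which is the main obstacle. For every pair of conditions not connected in Diagram~\ref{section1.1 - implications Diagram}, one must exhibit a triple $(K,M,G)$ realizing the source but violating the target. The plan in \Sref{sec4} is to take $G$ a $p$-group and $M$ a number field, and to separate the conditions by exploiting the gap between tame and wild admissibility emphasized in the introduction; in the tame setting many of the variants collapse (as discussed around \Sref{sec:cyclic}), so the separation must come from genuinely wild ramification.
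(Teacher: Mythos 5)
Your handling of the eleven positive arrows follows essentially the paper's route (unwind the definitions; use $\dimcol{L_0}{K}=\dimcol{L_0M}{M}=\card{G}$ to get $L_0\cap M=K$, hence $L=L_0\otimes_K M$ and the transfer of Galois structure), but your justification of \Caseiv$\Rightarrow$\Casei\ is not sound. From a subfield $L\subset D_0\otimes_K M$ with $\dimcol{L}{M}$ equal to the degree of the algebra one cannot conclude that ``the index equals the degree'': such a subfield only shows that $L$ splits $D_0\otimes M$, and $M_n(M)$ contains every degree-$n$ field extension of $M$. Under that weak reading the implication is in fact false for general fields: take $K=\Real(t)$, $M=\mC(t)$, $G=\Z/2\Z$, $D_0$ the quaternion algebra $(-1,-1)$ over $\Real(t)$ and $L=\mC(\sqrt t)$; then $L$ sits as a self-centralizing maximal subfield of $D_0\otimes M\cong M_2(M)$, yet no nontrivial group is $\mC(t)$-admissible since $\Br(\mC(t))$ is trivial. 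The arrow holds for the reason the paper intends: in Condition \Caseiv\ the realization tuple is $(L,D_0,D)$ with $D=D_0\otimes M$ itself a division algebra having $L$ as a maximal subfield (this is how the condition is used throughout \Sref{sec4}), so $L$ is $M$-adequate and $M$-admissibility is part of the hypothesis; no argument about index versus degree is needed, and none is available.

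The larger gap is the negative half of the statement: ``but no others'' is part of the theorem, and your proposal only announces a strategy ($p$-groups, number fields, tame versus wild) without producing any counterexample. The paper's proof of this half occupies \Sref{sec4}: it first uses the diagram itself to cut the task down to a short explicit list of non-implications (for instance $\Casenor\not\Rightarrow\Caseii,\Caseiv,\Casev,\Casevii$; $\Caseiv\not\Rightarrow\Caseii,\Casevii$; $\Casevi\not\Rightarrow\Caseiv$; $\Caseviii\not\Rightarrow\Caseiii,\Casevii$; $\Caseiii\not\Rightarrow\Caseviii,\Casevii$; $\Casev\not\Rightarrow\Casenor$), and then constructs concrete triples $(K,M,G)$ --- e.g.\ $G=(\Z/p\Z)^3$ over $K=\mQ(i,\sqrt p)$, $G=\F_p^p$ inside a wreath product over $\mQ(i)$, and a metacyclic group of order $p^3$ over $\mQ(\mu_p)$ --- using Schacher's criterion, Grunwald--Wang, counting generators of local Galois groups, and Liedahl's condition. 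Merely invoking the tame/wild dichotomy establishes none of these separations, so as it stands your proposal proves only the (easy) positive part of the theorem.
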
%

In \Sref{sec4} we give counterexamples for the false implications,
with $G$ being a $p$-group and $K$ a number field in each case.
Let us go over the implications in diagram \ref{section1.1 -
implications Diagram}.

\begin{proof}[Proof of positive part of \Tref{section1.1 - implications Thm}]
Fix $K$, $M$ and $G$. Clearly if $(L_0,L,D_0,D)$ realizes
Condition $\Casev$, $(L_0,L,D_0,D)$ also realizes $\Casevi$ and
$(L,D_0,D)$ realizes $\Caseiv$, so that $\Casev\Rightarrow
\Caseiv,\Casevi$.

If $(L_0,L,D_0,D)$ realizes $\Casevi$ then $L_0/K$ is a
$G$-extension and hence $L=L_0M/M$ is also a $G$-extension (since
$L_0\cap M=K$). Thus, $(L_0,L,D_0,D)$ realizes $\Caseviii$. It is
clear that $L_0$ is a field of definition of $L$ (and
$\Gal(L_0/K)=G$) and hence $(L_0,L,D)$ realizes \Caseiii. As $L_0$
is a $K$-adequate $G$-extension and $L$ is an $M$-adequate
$G$-extension, $(L_0,L,D_0,D)$ realizes $\Casevii$. Therefore
$\Casevi\Rightarrow \Caseiii,\Casevii,\Caseviii$.

If $(L_0,L,D)$ realizes Condition \Caseiii\ then $\Gal(L_0/K)=G$,
$\Gal(L/M)=G$ (since $L_0\cap M=K$) and hence $(L_0,L,D_0,D)$
realizes Condition \Caseii. If $(L_0,L,D_0,D)$ realizes condition
$\Caseviii$, clearly $L_0$ is a field of definition of $L$ and
hence $(L_0,L,D)$ realizes Condition \Caseii.

Clearly when $(K,M,G)$ satisfies either of the conditions
$\Casenor, \Caseii,\Caseiv,\Casevii$, $G$ is $M$-admissible and hence
$\Casenor, \Caseii,\Caseiv,\Casevii\ \Rightarrow \Casei$.
\end{proof}

\section{Cyclic groups over number fields}\label{sec:cyclic}
For a prime $\nu$ of a number field $K$, we denote by $K_{\nu}$
the completion of $K$ with respect to $\nu$. If $L/K$ is a finite
Galois extension, $L_{\nu}$ denotes the completion of $L$ with
respect to some prime divisor of $\nu$ in $L$.

The basic criterion for admissibility over number fields is due to Schacher:
\begin{thm}[\cite{Sch}]\label{Schachers criterion}
Let $K$ be a number field and $G$ a finite group. Then $G$ is
$K$-admissible if and only if there exists a Galois $G$-extension
$L/K$ such that for every rational prime $p$ dividing $\card{G}$,
there is a pair of primes $\nu_1 ,\nu_2$ of $K$ such that each of
$\Gal(L_{\nu_i}/K_{\nu_i})$ contains a $p$-Sylow subgroup of $G$.
\end{thm}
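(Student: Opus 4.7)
The plan is to derive both directions from the Brauer--Hasse--Noether local-global principle: every class in $\Br(K)$ is determined by its local invariants $\inv_\nu \in \mQ/\Z$, which satisfy $\sum_\nu \inv_\nu = 0$, and the global index equals $\lcm_\nu \ord(\inv_\nu)$. Write $\card{G}_p$ for the $p$-part of $\card{G}$, and for a prime $w$ of $L$ above $\nu$ recall that $\Gal(L_w/K_\nu)$ is the decomposition group, a subgroup of $G$ of order $[L_w:K_\nu]$.

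\textbf{Necessity.} Suppose $G$ is $K$-admissible, realized by a division algebra $D$ with maximal $G$-Galois subfield $L/K$, so that $\ind(D)=\card{G}$ and $L$ splits $D$. Fix $p \divides \card{G}$. The $p$-primary component $[D]\{p\}$ has index $\card{G}_p$, so there exists a prime $\nu_1$ with $\ord(\inv_{\nu_1}[D]\{p\}) = \card{G}_p$. If $\nu_1$ were the only such prime, all remaining $p$-primary invariants would have order dividing $\card{G}_p/p$ and their sum would lie in the same smaller cyclic subgroup of $\mQ/\Z$, which cannot cancel the contribution at $\nu_1$; this would contradict the sum-zero relation. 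Hence a second prime $\nu_2$ exists with $\ord(\inv_{\nu_2}[D]\{p\}) = \card{G}_p$. Because $L$ splits $D$, at each $\nu_i$ one has $\card{G}_p \divides \ind(D_{\nu_i}) \divides [L_{w_i}:K_{\nu_i}]$, so the decomposition group contains a $p$-Sylow of $G$.

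\textbf{Sufficiency.} Given $L$ with the stated local property, I construct the desired division algebra one prime $p$ at a time. For each $p \divides \card{G}$, let $\nu_1,\nu_2$ be primes furnished by the hypothesis, and invoke Brauer--Hasse--Noether to produce a class $[D_p] \in \Br(K)$ with local invariants $1/\card{G}_p$ at $\nu_1$, $-1/\card{G}_p$ at $\nu_2$, and zero elsewhere. This class has index $\card{G}_p$, and at each $\nu_i$ the local degree $[L_{w_i}:K_{\nu_i}]$ is divisible by $\card{G}_p$ by hypothesis, so $L$ splits $[D_p]$ (it is trivial at all other primes, hence split there automatically). Setting $[D] = \sum_p [D_p]$, the summands lie in pairwise coprime primary components of $\Br(K)$, so $\ind(D) = \prod_p \card{G}_p = \card{G}$, and $L$ splits each summand and therefore the sum. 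Since $[L:K] = \card{G} = \ind(D)$, the field $L$ is a maximal subfield of the division algebra representing $[D]$, which exhibits $G$ as $K$-admissible.

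\textbf{Main obstacle.} The delicate ingredient is the requirement of \emph{two} primes per $p$ rather than one. In the forward direction it is forced by the global reciprocity $\sum_\nu \inv_\nu = 0$, which precludes concentrating a non-trivial $p$-primary class at a single place. In the reverse direction, two primes carrying opposite invariants of order $\card{G}_p$ provide the minimal data needed to produce a Brauer class of index $\card{G}_p$ compatible with the sum-zero constraint, so matching the hypothesis exactly to this construction is what makes the criterion tight.
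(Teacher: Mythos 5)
Your proof is correct. The paper itself does not prove this statement --- it is quoted from Schacher \cite{Sch} --- and your argument is essentially Schacher's original one: the Brauer--Hasse--Noether description of $\Br(K)$, the equality of index and exponent over number fields, the sum-zero relation forcing a second place of full $p$-local index, and the fact that a splitting field of degree equal to the index embeds as a maximal subfield. Both directions, including the two-primes argument via the reciprocity constraint and the assembly of the class from coprime primary pieces, are sound.
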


We use this criterion in the construction of counterexamples in \Sref{sec4} and to prove the following proposition:
\begin{prop}
Let $G$ be a cyclic group. Then Conditions \Casei \ and \Caseii--\Casefinal\ are
satisfied for any extension of number fields $M/K$.
\end{prop}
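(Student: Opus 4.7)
The plan is to prove the strongest condition, \Casev, from which Conditions \Casei, \Caseii, \Caseiii, \Caseiv, \Casevi, \Caseviii\ and \Casevii\ all follow by the positive part of \Tref{section1.1 - implications Thm}. Write $n=\card{G}$; since $G$ is cyclic of order $n$, I need to exhibit a cyclic degree-$n$ extension $L_0/K$ disjoint from $M$, together with a $K$-division algebra $D_0$ of index $n$ having $L_0$ as a maximal subfield, such that $D_0\tensor[K] M$ is again a division algebra and $L_0M$ is a maximal subfield of it.

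The first step is to choose two non-archimedean primes $\nu_1,\nu_2$ of $K$ that split completely in $M$; infinitely many such primes exist by Chebotarev. For each $\nu_i$ the unramified cyclic extension of $K_{\nu_i}$ of degree $n$ exists, and by Grunwald--Wang there is then a cyclic extension $L_0/K$ of degree $n$ whose completion at $\nu_i$ is this unramified extension, for $i=1,2$. Since $L_0/K$ is cyclic and the decomposition group at $\nu_1$ is the full group $\Gal(L_0/K)$, every intermediate field $F$ of $L_0/K$ has local degree $[F:K]$ at $\nu_1$; any $F\sub M$, on the other hand, has local degree $1$ at $\nu_1$ because $\nu_1$ splits completely in $M$. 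Hence $L_0\cap M=K$, which in particular yields $[L_0M:M]=n$ and $\Gal(L_0M/M)\cong G$.

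Next, by Albert--Brauer--Hasse--Noether, produce a central simple $K$-algebra $D_0$ with $\inv_{\nu_1}(D_0)=1/n$, $\inv_{\nu_2}(D_0)=-1/n$, and trivial invariants at all other primes. Then $\ind(D_0)=n=[L_0:K]$, and the local-global splitting criterion, applied at $\nu_1,\nu_2$ where $L_0$ has local degree exactly $n$, shows that $L_0$ splits $D_0$; hence $L_0$ embeds as a maximal subfield of $D_0$. Since each $\nu_i$ splits completely in $M$, the invariants of $D:=D_0\tensor[K] M$ at the primes above $\nu_i$ remain $\pm 1/n$, so $\ind(D)=n$ and $D$ is a division algebra. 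The analogous local calculation at primes above $\nu_1,\nu_2$ shows that $L_0M$ splits $D$, and since $[L_0M:M]=n=\ind(D)$, the field $L_0M$ is a maximal subfield of $D$. Thus $(L_0,L_0M,D_0,D)$ realizes \Casev.

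The main technical concern is the Grunwald--Wang ``special case,'' which can obstruct the construction of $L_0$ when $8\mid n$; this is dodged uniformly by arranging the auxiliary primes $\nu_1,\nu_2$ to have odd residue characteristic, a constraint compatible by Chebotarev with complete splitting in $M$.
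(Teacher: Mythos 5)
Your proof is correct and follows essentially the same route as the paper: reduce to Condition \Casev, pick by Chebotarev two primes of odd residue characteristic that split completely in $M$, use Grunwald--Wang to get a cyclic $L_0/K$ with full local degree there, and take the division algebra supported at those two primes, which stays division (with $L_0M$ maximal) after extension to $M$. Your version merely spells out the invariant computations and the disjointness $L_0\cap M=K$ more explicitly than the paper does.
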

\begin{proof}
It is sufficient to show that $\Casev$ is satisfied. By Chebutarev
density Theorem (applied to the Galois closure of $M/K$) there are
infinitely many primes $\nu$ of $K$ that split completely in $M$.
Let $\nu_1,\nu_2$ be two such primes that are not divisors of $2$.
By the weak version (prescribing degrees and not local extensions)
of the Grunwald-Wang Theorem (see \cite[Corollary 2]{Wan} or \cite[Chapter 10]{AT}) there is a
$G$-extension $L_0/K$ for which $\Gal((L_0)_{\nu_i}/K_{\nu_i})=G$
and thus $L_0$ is $K$-adequate, so there is a division algebra
$D_0$ containing $L_0$ as a maximal subfield, and supported by
$\{\nu_1,\nu_2\}$. As $\nu_i$ split completely in $M$ we
have $L=L_0M$ satisfies $\Gal(L_{\nu_i}/M_{\nu_i})=G$ for $i=1,2$
and hence $\Gal(L/M)=G$. Finally $D = D_0 \tensor M$ is a division
algebra by the choice of the $\nu_i$. Thus $L$ is $M$-adequate and
$(K,M,G)$ satisfies $\Casev$.
\end{proof}
\begin{exam}\label{nogal-case2.rem}
If $M/K$ is not normal, \Casenor\ does not necessarily hold for a
cyclic group $G$. Let $n\geq2$ and $M/K$ be an
extension of degree $n$ whose Galois closure $M'$ has Galois group
$\Gal(M'/K)=S_n$. Then any field $L\supseteq M$, which is Galois
over $K$, must contain $M'$ and hence there is no (adequate)
$C_n$-extension $L/M$ for which $L/K$ is Galois. In particular,
$\Casev\not\Rightarrow \Casenor$.
\end{exam}
\begin{rem}\label{easy}
If $F_1$ and $F_2$ are field extensions of $F$ such that $L = F_1
\tensor[F] F_2$ is a field, and $F_1/F$ and $L/F_1$ are Galois,
then $L$ is Galois over $F$.
\end{rem}
\begin{rem}\label{normal_case2.rem}
This shows that if $M/K$ is Galois then
$\Caseii\Rightarrow\Casenor$. In particular $\Casenor$ holds for
$G$ cyclic.
\end{rem}


We mention in this context the `linear disjointness' (LD) of
number fields, as defined and established in \cite[Prop.~2.7]{RR}:
for every finite extension $M/K$ in characteristic $0$, any
central simple algebra over $K$ contains a maximal separable
subfield $P$ that is linearly disjoint from $M$ over $K$.
This notion can be bypassed by appealing to the Chebutarev
density, as above.

\section{Tame admissibility}\label{s:tame}

The conditions of \Sref{sec:cond} can also be considered with respect to tame
$K$-admissibility. Let us recall the definition of tame admissibility.

For an extension of fields $L/K$, $\Br(L/K)$ denotes the kernel of
the restriction map $\res \co \Br(K) \ra \Br(L)$.
\forget
For number
fields we have the following isomorphism of groups, where $\Pi_K$
is the set of primes of $K$, and $(\,\cdot\,)_0$ denotes that the
sum of invariants is zero:
\begin{equation*}
\Br(L/K)\cong \(\bigoplus_{\pi\in\Pi_K} \frac{1}{\gcd_{\pi' | \pi}
\dimcol{L_{\pi'}}{K_{\pi}}}\mathbb{Z} /
\mathbb{Z}\)_0,\end{equation*}
\forgotten
%
Let $\tame{\Br(L/K)}$ be the subgroup of the relative Brauer group
$\Br(L/K)$ that consists of the Brauer classes
which are split by the maximal tame subextension of $L_{\nu}/K_{\nu}$, for every prime $\nu$ of $L$.
\forget
This is the subgroup corresponding under the above
isomorphism to
\begin{equation}
\(\bigoplus_{\pi\in\Pi_K} \frac{1}{\gcd_{\pi'|\pi} \dimcol{L_{\pi'}\cap
\tame{(K_{\pi})}}{K_{\pi}}}\Z / \Z\)_0.\end{equation}
\forgotten

Over a number field $K$, the exponent of a division algebra is
equal to its index, and so $L$ is $K$-adequate if and only if there is an
element of order $\dimcol{L}{K}$ in $\Br(L/K)$ (\cite[Proposition 2.1]{Sch}). Following this
observation one defines:
\begin{defn}
Let $K$ be a number field. We say that a finite extension $L$ of $K$ is \emph{tamely
$K$-adequate} if there is an element of order $\dimcol{L}{K}$ in
$\tame{\Br(L/K)}$.

{}Likewise, a finite group $G$ is \emph{tamely $K$-admissible} if
there is a tamely $K$-adequate $G$-extension $L/K$.
\end{defn}

\subsection{Liedahl's condition}

Let $\mu_n$ denote the set of $n$-th roots of unity in $\mC$. For
$t$ prime to $n$, let $\sigma_{t,n}$ be the automorphism of
$\mQ(\mu_n)/\mQ$ defined by $\sigma_{t,n}(\zeta)=\zeta^t$ for
$\zeta\in\mu_n$.
\begin{defn}\label{def:LC}
We say that a metacyclic $p$-group $G$ satisfies
\emph{Liedahl's condition} (first defined in \cite{Lid2}) with respect to $K$, if it has a
presentation
\begin{equation}\label{Mdef}
G = \sg{x,y \subjectto x^m = y^i,\ y^n = 1,\
x^{-1}yx = y^t}
\end{equation} such that $\s_{t,n}$ fixes $K \cap
\mQ(\mu_n)$.
\end{defn}

It follows from \cite{Lid2} (see also \cite[Corollary 2.1.7]{Nef})
that tamely $K$-admissible groups $G$ have metacyclic $p$-Sylow
subgroups that satisfy Liedahl's condition for every prime divisor
$p$ of $\card{G}$.
There are no known counterexamples
to the opposite implication.
In fact if a metacyclic $p$-group satisfies Liedahl's condition over $K$ then it is realizable over infinitely many completion of $K$ (see \cite{Lid2}).

\begin{rem}\label{section0 - remark on down-inheritance of Lid cond}
Note that if a metacyclic $p$-group $G$ satisfies Liedahl's
condition over $M$, then it satisfies the condition over every
subfield $K$.
\end{rem}


The following is shown in \cite[Theorem~30]{Lid2} for $G$ a $p$-group, and in \cite[Theorem~2.3.1]{Nef} for $G$ solvable.
\begin{thm}\label{section2- remark about lifting of division algs}
Let $K$ be a number field and $G$ a solvable group whose Sylow
subgroups satisfy Liedahl's condition. Then $G$ is tamely
$K$-admissible.
\end{thm}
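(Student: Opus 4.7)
The plan is to verify Schacher's criterion (\Tref{Schachers criterion}) for a carefully constructed $G$-extension $L/K$ with prescribed local behavior. First I would reduce the problem to a local-to-global realization question: it suffices to produce a Galois $G$-extension $L/K$ such that for every rational prime $p \mid \card{G}$ there exist two primes $\nu_1^{(p)}, \nu_2^{(p)}$ of $K$ with residue characteristic different from $p$, at which the decomposition group $\Gal(L_{\nu_i^{(p)}}/K_{\nu_i^{(p)}})$ equals a $p$-Sylow subgroup $G_p$ of $G$, and at which the extension is tamely ramified. This would simultaneously force $K$-admissibility and tameness of the resulting division algebra.

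Next I would handle the local input. Fix a prime $p \mid \card{G}$. By hypothesis the Sylow subgroup $G_p$ is metacyclic and satisfies Liedahl's condition over $K$, hence over each completion $K_\nu$ by \Rref{section0 - remark on down-inheritance of Lid cond}. Invoking Liedahl's local realization result (the parenthetical statement preceding this theorem), $G_p$ is the Galois group of a tame extension of $K_\nu$ for infinitely many primes $\nu$, and in particular for primes with residue characteristic distinct from $p$. For each $p \mid \card{G}$ I would choose two such primes $\nu_1^{(p)}, \nu_2^{(p)}$, taking care that all chosen primes are distinct from one another and from any prime dividing $\card{G}$, and record the target local extension $N_i^{(p)}/K_{\nu_i^{(p)}}$ with Galois group $G_p$.

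The global realization is the heart of the argument. Here I would use the solvability of $G$ to patch together the local targets into a Galois $G$-extension $L/K$. One approach is inductive along a normal series $1 = G^{(0)} \triangleleft G^{(1)} \triangleleft \cdots \triangleleft G^{(r)} = G$ with cyclic simple factors: at each stage one solves an embedding problem over $K$ whose local restrictions at the chosen primes are dictated by the corresponding sub/quotient structure of the $N_i^{(p)}$. The tools are the standard Grunwald--Wang theorem (for cyclic extensions with prescribed completions, staying away from $\nu \divides 2$ which are none of ours) together with Neukirch-type results on the solvability of embedding problems for solvable groups over number fields with prescribed local behavior at finitely many primes. After the construction, tameness at each $\nu_i^{(p)}$ is inherited from the choice of target $N_i^{(p)}$, and Schacher's criterion delivers a division algebra $D$ split outside $\{\nu_i^{(p)}\}$ with invariants supported on tame summands, witnessing tame $K$-admissibility.

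The main obstacle is the last step: assembling one global $G$-extension that realizes the prescribed $p$-Sylow decomposition groups at $\nu_1^{(p)},\nu_2^{(p)}$ simultaneously for \emph{all} $p \mid \card{G}$. The delicate point is that each inductive step must solve a new embedding problem without destroying the local profile already arranged at previously fixed primes, which forces one to appeal to a refined solvable Grunwald theorem (of the flavor in \cite{Nef}) rather than a single application of Grunwald--Wang. Once that technical statement is in hand, the rest of the argument is a direct combination of Liedahl's local input with Schacher's criterion.
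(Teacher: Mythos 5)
The paper does not actually prove this theorem: it quotes it, citing \cite[Theorem~30]{Lid2} for $p$-groups and \cite[Theorem~2.3.1]{Nef} for solvable $G$. Your outline follows the same broad strategy as those sources: Liedahl's condition supplies, for each rational $p\divides\card{G}$, infinitely many primes of $K$ at which the Sylow subgroup $G_p$ occurs as a tame local Galois group (the parenthetical remark before the theorem), and once a global $G$-extension $L/K$ with such tame $G_p$-behaviour at two primes for each $p$ is in hand, your invariant bookkeeping (classes $\pm 1/p^{e_p}$ at the chosen primes, zero elsewhere) does give an element of order $\card{G}$ in $\tame{\Br(L/K)}$, so the local input and the final Schacher-type step are sound.

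The gap is the middle step, which is where the entire content of the theorem lies. For non-cyclic solvable $G$ the global extension cannot be produced by Grunwald--Wang alone, and your inductive scheme along a normal series with cyclic factors requires, at every stage, a \emph{proper} (surjective) solution of an embedding problem which moreover restricts to the prescribed local solutions at the primes fixed earlier; Grunwald--Wang says nothing about embedding problems, and Neukirch's theorems on embedding problems with prescribed local solutions \cite{Neu2} come with hypotheses and bookkeeping (solvable kernel, finitely many primes, the roots-of-unity/2-adic special case, and the need to keep earlier local data intact through successive steps) that you do not address. Your concluding appeal to ``a refined solvable Grunwald theorem of the flavor in \cite{Nef}'' is essentially circular: that refined statement \emph{is} \cite[Theorem~2.3.1]{Nef}, i.e.\ the very result to be proved. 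As written, the proposal reduces the theorem to itself; to turn it into a proof you would have to carry out the patching argument explicitly (as Liedahl does for $p$-groups and Neftin does for solvable groups via the embedding-problem machinery), verifying at each inductive step that the tame local behaviour already arranged at the chosen primes is preserved.
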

\begin{rem}\label{tame.rem}In fact the proof of \cite[Theorem~2.3.1]{Nef}
shows that there is a $G$-extension $L_0/K$ and $D_0\in
\Br(L_0/K)_{tr}$ such that $D:=D_0\otimes_\mQ K$ remains a
division algebra.

In particular $L:=L_0\otimes_K M$ is an $M$-adequate field which is a $G$-extension of $M$.
Thus, not only $G$ is $M$-admissible but there is also a
$G$-crossed product division algebra $D$ and a maximal subfield
$L$ so that both are defined compatibly over $\mQ$. \end{rem}


As a corollary one has (see \cite{Nef}):
\begin{cor}\label{section2-sylow meta cyclic admissibility corollary}
Let $K$ be a number field. Let $G$ be a solvable group such that
the rational prime divisors of $\card{G}$ do not decompose (i.e.
have a unique prime divisor) in $K$. Then $G$ is $K$-admissible if
and only if its Sylow subgroups are metacyclic and satisfy
Liedhal's condition.
\end{cor}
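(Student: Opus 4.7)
The plan is to prove the two directions separately, using the theorems of the section as black boxes as far as possible. The non-decomposition hypothesis is needed only for one direction.

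The reverse direction $(\Leftarrow)$ is immediate from \Tref{section2- remark about lifting of division algs}: if $G$ is solvable and each Sylow subgroup satisfies Liedahl's condition over $K$, then $G$ is already tamely $K$-admissible, hence $K$-admissible. Note that the non-decomposition hypothesis on the primes dividing $\card G$ plays no role here.

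For $(\Rightarrow)$, assume $G$ is $K$-admissible and let $L/K$ be a $K$-adequate $G$-extension. Fix a rational prime $p \mid \card G$ and a $p$-Sylow subgroup $P \le G$. By Schacher's criterion (\Tref{Schachers criterion}) there are two primes $\nu_1, \nu_2$ of $K$ whose decomposition groups $\Gal(L_{\nu_i}/K_{\nu_i})$ each contain (a conjugate of) $P$. By hypothesis, $p$ has a unique prime divisor in $K$; since $\nu_1 \neq \nu_2$, at least one of them — say $\nu_1$ — has residue characteristic $q \neq p$. At such a prime, the wild inertia of $\Gal(L_{\nu_1}/K_{\nu_1})$ is a pro-$q$ group, so $P$ injects into the tame quotient, which has the standard metacyclic form $\sg{\sigma, \tau \suchthat \tau^e = 1,\ \sigma\tau\sigma^{-1} = \tau^{q'}}$, where $\sigma$ is a Frobenius lift and $q' = \card{\kappa(\nu_1)}$. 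Since subgroups of metacyclic $p$-groups are metacyclic, $P$ itself is metacyclic.

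It remains to extract Liedahl's condition for $P$ over $K$ from this local embedding. Taking the presentation of $P$ inherited from the embedding into the tame local group, the conjugation exponent $t$ in the resulting presentation $\sg{x, y \suchthat x^m = y^i,\ y^n = 1,\ x^{-1}yx = y^t}$ is a power of $q'$. Hence $\s_{t,n}$ lies in the cyclic subgroup of $\Gal(\mQ(\mu_n)/\mQ)$ generated by the Frobenius at $\nu_1$, which by unramifiedness of $\mQ(\mu_n)/\mQ$ at $\nu_1$ fixes $K_{\nu_1} \cap \mQ(\mu_n)$. Since $K \sub K_{\nu_1}$, monotonicity of the condition (\Rref{section0 - remark on down-inheritance of Lid cond}) yields that $\s_{t,n}$ fixes $K \cap \mQ(\mu_n)$, which is Liedahl's condition for $P$ over $K$.

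The main obstacle is the third paragraph: one must carefully read off from the local metacyclic structure a presentation of $P$ whose conjugation exponent $t$ is a Frobenius power modulo $n$. Everything else is either invoking Schacher (for primes) and \Tref{section2- remark about lifting of division algs} (for the converse), or using the standard tame local structure together with \Rref{section0 - remark on down-inheritance of Lid cond} to pass from $K_{\nu_1}$ down to $K$.
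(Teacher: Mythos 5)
Your proof is correct and takes the route the paper itself intends: the converse is exactly \Tref{section2- remark about lifting of division algs}, and the forward direction is the Schacher-plus-tame-local-structure argument that the paper delegates to \cite{Lid2} and \cite[Corollary 2.1.7]{Nef} --- under the non-decomposition hypothesis one of Schacher's two primes for $p$ has residue characteristic $q\neq p$, so the $p$-Sylow injects into the metacyclic tame quotient there, and the conjugation exponent of the inherited presentation is a power of the local Frobenius exponent, whose associated automorphism fixes $K_{\nu_1}\cap\mQ(\mu_n)\supseteq K\cap\mQ(\mu_n)$. The only nitpicks are cosmetic: the ambient tame group is metacyclic but not a $p$-group (what you use is that subgroups of metacyclic groups are metacyclic), and depending on conventions $t$ comes out as a power of the inverse of the residue cardinality modulo $n$, neither of which affects the conclusion.
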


\subsection{Fields of definition for tame admissibility}

The conditions of \Sref{sec:cond} can also be considered with respect to tame
$K$-admissibility. Let $G$ be a solvable group and $K,M$ number
fields. By \Pref{section2- remark about lifting of division
algs}, if $G$ is tamely $M$-admissible then there is a tamely
$K$-adequate $G$-extension $L_0/K$ for which $L=L_0M$ is
$M$-adequate (and hence tamely $M$-adequate). For $m = 1,
\dots,\final$, let $(m^*)$ denote the condition $(m)$, where every
adequate extension is assumed to be tamely adequate, and an
admissible group is assumed tamely admissible. More precisely for
$m = \ref{caseiv}, \ref{casev}$ we consider
\begin{enumerate}
\item[$(\ref{caseiv}^*)$]
there is a $K$-division algebra $D_0$ and a $G$-extension
$L/M$ for which $[D]=[D_0\otimes M] \in \tame{\Br(L/M)}$ and $L$
is a maximal subfield of $D$,
\end{enumerate}
and
\begin{enumerate}
\item[$(\ref{casev}^*)$]
there is a $K$-division algebra $D_0$ and a maximal subfield $L_0$
which is a $G$-extension of $K$ so that $L_0\cap M=K$,
$D_0\in \tame{\Br(L_0/K)}$ and $L=L_0M$ is a maximal subfield of
$D=D_0\otimes M$ (and hence $[D]\in \tame{\Br(L/M)}$).
\end{enumerate}
\begin{cor}\label{section1.1 - Corollary on tame admissibility}
Let $G$ be a solvable group and $M/K$ a finite extension of number
fields. Then the conditions $(1^*)$ and $(3^*)$--$(\final^*)$ are
all equivalent.
\end{cor}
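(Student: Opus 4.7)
The plan is to identify $(\ref{casev}^{*})$ as the strongest of the eight conditions, so that the other seven follow from (the tame analogue of) \Tref{section1.1 - implications Thm}, and then to close the loop by deriving $(\ref{casev}^{*})$ from $(\ref{casei}^{*})$ via \Rref{tame.rem}.

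First I would check that every positive implication in \Tref{section1.1 - implications Thm} transfers verbatim to the starred setting. The proof there simply passes one and the same quadruple $(L_0,L,D_0,D)$ between the successive conditions, so the only new thing to verify is that the tameness of the Brauer class is preserved by scalar extension from $K$ to $M$. This is a purely local check: for a prime $\nu$ of $L=L_0M$ lying over primes $\nu_{L_0}$, $\nu_K$, $\nu_M$ of $L_0,K,M$ respectively, the maximal tame subextension of $L_\nu/M_{\nu_M}$ contains the compositum of $M_{\nu_M}$ with the maximal tame subextension of $(L_0)_{\nu_{L_0}}/K_{\nu_K}$ (tameness is preserved under composition with an extension of the base), so a class split by the latter is split by the former. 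The starred form of Diagram \ref{section1.1 - implications Diagram} thus gives $(\ref{casev}^{*})\Rightarrow(\ref{caseiv}^{*}),(\ref{casevi}^{*})$; $(\ref{casevi}^{*})\Rightarrow(\ref{caseiii}^{*}),(\ref{casevii}^{*}),(\ref{caseviii}^{*})$; $(\ref{caseiii}^{*}),(\ref{caseviii}^{*})\Rightarrow(\ref{caseii}^{*})$; and $(\ref{caseiv}^{*}),(\ref{caseii}^{*}),(\ref{casevii}^{*})\Rightarrow(\ref{casei}^{*})$.

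To close the loop, suppose $G$ is tamely $M$-admissible. By the Liedahl-type result cited between \Dref{def:LC} and \Tref{section2- remark about lifting of division algs}, the Sylow subgroups of $G$ are metacyclic and satisfy Liedahl's condition over $M$; by \Rref{section0 - remark on down-inheritance of Lid cond}, they therefore satisfy it over $K$ as well. Invoking \Tref{section2- remark about lifting of division algs} together with the refinement in \Rref{tame.rem}, applied with $K$ as the base field and $M$ as the extension, furnishes a $G$-extension $L_0/K$ and a class $[D_0]\in\tame{\Br(L_0/K)}$ of order $|G|$ with $L_0$ a maximal subfield of $D_0$, such that $D:=D_0\otimes_K M$ remains a $G$-crossed product division algebra with maximal subfield $L:=L_0\otimes_K M$. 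In particular this forces $L_0\cap M=K$ and realises $(\ref{casev}^{*})$.

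The main obstacle is precisely this closing step. Tame $K$-admissibility on its own would only produce a $K$-adequate $G$-extension, with no guarantee that the associated division algebra survives scalar extension to $M$; the compatibility asserted in \Rref{tame.rem}, extracted from the proof of \cite[Theorem~2.3.1]{Nef}, is exactly what is needed to bridge the gap. Once it is in hand, the equivalence of all eight conditions is immediate from the first step.
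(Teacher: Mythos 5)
Your proposal is correct and follows essentially the same route as the paper: the starred implications are inherited from Diagram \eq{section1.1 - implications Diagram} (your local check that tameness of the Brauer class survives scalar extension is exactly the point the paper leaves as ``clear''), and the loop is closed by $(\ref{casei}^*)\Rightarrow(\ref{casev}^*)$ via Liedahl's condition descending from $M$ to $K$ and the compatible construction of \Rref{tame.rem}, which is precisely the argument sketched in the paper before and in the proof of the corollary.
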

\begin{proof}
With the added conditions the implications given in \eq{section1.1
- implications Diagram} clearly continue to hold. But by \Rref{tame.rem}
the implication $(\ref{casei}^*) \Rightarrow (\ref{casev}^*)$ also
holds.

\end{proof}


\section{Examples}\label{sec4}

In this section we give counterexamples for all the implications
not claimed in \Tref{section1.1 - implications Thm}. In all the
examples, the group $G$ is a $p$-group. This shows that
Diagram~\ref{section1.1 - implications Diagram} describes all the
correct implications even for $p$-groups.

Let us first show that none of the conditions $\Casenor$,
$\Caseiv$ or $\Casevii$ imply any other condition except \Casei.
For this, by the implication Diagram \ref{section1.1 -
implications Diagram}, it is sufficient to show that $\Casenor
\not\Rightarrow \Casevii$, $\Casenor \not\Rightarrow \Caseiv$, $
\Casenor \not\Rightarrow \Caseii$, $\Caseiv \not\Rightarrow
\Casevii$, $\Casevii \not\Rightarrow \Caseiv$, $\Caseiv
\not\Rightarrow \Caseii$ and that $\Casevii \not\Rightarrow
\Caseii$. We will show that $\Casevii \not \Rightarrow \Caseiv$ by
demonstrating that $\Casevi\not\Rightarrow \Caseiv$. In fact an
example for $\Casevi\not\Rightarrow \Caseiv$ will show that no
other condition, except \Casev, implies Condition \Caseiv. To
complete the proof we should also prove $\Casenor\not\Rightarrow
\Casev$, $\Caseviii\not\Rightarrow \Casevii$,
$\Caseviii\not\Rightarrow \Caseiii$, $\Caseiii \not\Rightarrow
\Casevii$ and $\Caseiii\not\Rightarrow \Caseviii$.

\begin{rem}
Note that $\Casev\not\Rightarrow \Casenor$ follows from
\Rref{nogal-case2.rem}
\end{rem}
\begin{exam}[$\Casenor\not\Rightarrow \Caseii,\Caseiv\not\Rightarrow \Caseii,\Casevii\not\Rightarrow \Caseii$]
Let $p\equiv 1 \pmod{4}$, $G=(\Z/p\Z)^3$ and $K=\mQ(i,\sqrt{p})$.
Note that $p$ splits in $K$. Denote the prime divisors of $p$ in
$K$ by $\nu_1,\nu_2$.

Let $\overline{K_{\nu_i}(p)}^{\ab}$ be the maximal abelian pro-$p$
extension of $K_{\nu_i}$. By local class field theory the Galois
group $\Gal(\overline{K_{\nu_i}(p)}^{\ab}/K_{\nu_i})$ is
isomorphic to the pro-$p$ completion of the group $K_{\nu_i}^*$
which is $\Z_p^n$ where $n=\dimcol{K_{\nu_i}}{\mQ_p}+1=3$ (see
\cite{Ser3}, Chapter 14, Section 6).

Since $K_{\nu_1}=K_{\nu_2}=\mQ_p(\sqrt{p})$ this shows $G$ is
realizable over $K_{\nu_1},K_{\nu_2}$. 
By the Grunwald-Wang Theorem there a $(\Z/p^2\Z)^3$-extension $\hat{M}/K$ 
such that
$\hat{M}_{\nu_i}$ is the maximal abelian extension of exponent $p^2$ of
$K_{\nu_i}$, namely the unique $(\Z/p^2\Z)^3$-extension of
$K_{\nu_i}$. Let $M=\hat{M}^G$, so that $\Gal(M/K)\cong G$.

Since $\hat{M}/M$ and $M/K$ both have full local degrees at $v_1,v_2$,
both are adequate $G$-extensions. Note that $\hat{M}$ is also Galois
over $K$.
By choosing $L=\hat{M}$, we deduce that $(K,M,G)$ satisfies
conditions \Casenor \ and \Casevii. To show that $(K,M,G)$
satisfies \Caseiv\ it suffices to notice that $\nu_1,\nu_2$ have
unique prime divisors $\varpi_1,\varpi_2$ in $M$. Every division
algebra $D$ whose invariants are supported in
$\{\varpi_1,\varpi_2\}$ is $K$-uniformly distributed and hence
$D\in \Im(\res^M_K)$. Take $D$ with
\begin{equation*}
\inv_{\varpi_1}(D)=\frac{1}{p^3}, \quad \inv_{\varpi_2}(D)=-\frac{1}{p^3}
\end{equation*} and $\inv_w(D)=0$ for any other prime $\varpi$ of $M$.
We then have $D\in \Im(\res^M_K)$, $D$ is a $G$-crossed product
division algebra and hence $(K,M,G)$ satisfies \Caseiv.

Let us show \Caseii\ is not satisfied. Suppose on the contrary
that there is a triple $(L_0,L,D)$ realizing \Caseii. By
\Rref{easy}, $L/K$ is Galois and
\begin{equation*}
\Gal(L/K)\cong \Gal(L/L_0)\ltimes \Gal(L/M)\cong G\ltimes_\phi G
\end{equation*}
via some homomorphism $\phi:G\ra \Aut(G)=\GL[3](\mathbb{F}_p)$. As
$G$ is a $p$-group, $\phi$ is a homomorphism into some $p$-Sylow
subgroup $P$ of $\GL[3](\mathbb{F}_p)$. These are all conjugate,
so we can choose a basis $\{v_1,v_2,v_3\}$ of $\F_p^3$ for which
$P$ is the Heisenberg group (in other words the unipotent radical
of the standard Borel subgroup), generated by the transformations:
\begin{equation*}
\phi_x(a,b,c)=(a+b,b,c),\ \phi_y(a,b,c)=(a,b+c,c), \
\phi_u(a,b,c)=(a+c,b,c)
\end{equation*} which correspond to the
matrices
\begin{equation*}x=\left( \begin{array}{ccc}
1 & 1 & 0 \\
0 & 1 & 0 \\
0 & 0 & 1 \end{array} \right)
, y=\left( \begin{array}{ccc}
1 & 0 & 0 \\
0 & 1 & 1 \\
0 & 0 & 1 \end{array} \right),
u=\left( \begin{array}{ccc}
1 & 0 & 1 \\
0 & 1 & 0 \\
0 & 0 & 1 \end{array} \right).\end{equation*} Note that $P$ has
the presentation %
\begin{equation*} %
P=\langle x,y,u \subjectto x^p=y^p=u^p=[x,u]=[y,u]=1,[y,x]=u\rangle.%
\end{equation*}

Every subgroup of the form $\F_p^2\ltimes G$ is a maximal subgroup
of $G\ltimes G$ and thus the Frattini subgroup $\Phi$ of
$G\ltimes_\phi G$ is contained in $1\ltimes G$. Now the subgroup
$H=\langle v_1,v_2 \rangle\leq G$ is invariant under the action of
$P$ and hence under the action of $G$ via
$\phi$. So, $G\ltimes_\phi H\leq G\ltimes_\phi G$ is a
maximal subgroup and $\Phi\leq 1\ltimes H$. This shows that
$\dim_{\mathbb{F}_p}G/\Phi\geq 4$ and thus $G\ltimes_\phi G$ is not
generated by less than $4$ elements. Therefore $G\ltimes G$ is not
realizable over $\mathbb{Q}_p(\sqrt{p})$.

On the other hand both $L/M$ and $M/K$ have full rank at
$\varpi_i$ and $\nu_i$ and hence
$\Gal(L_{\varpi_i}/K_{\nu_i})=G\ltimes G$ which is a contradiction
as $G\ltimes G$ is not realizable over $K_{\nu_i}$. Thus,
$(K,M,G)$ does not satisfy Condition \Caseii.
\end{exam}

\begin{exam}[$\Casenor\not\Rightarrow \Casevii, \Caseviii\not\Rightarrow \Casevii,\Caseviii\not\Rightarrow \Caseiii$]
Let $p\equiv 1 \pmod{4}$, $K=\mQ(i)$ and $\nu_1,\nu_2$
the two prime divisors of $p$ in $K$. Let $G=\F_p^p$ and
$P=\F_p\wr (\Z/p\Z)$ so that $P=G \rtimes \sg{x}$ where $x^p = 1$.

The maximal $p$-extension $\oline{\mQ_p(p)}$ has Galois group
$\overline{G_{\mQ_p}(p)} := \Gal(\oline{\mQ_p(p)}/\mQ_p)$ which is
a free pro-$p$ group on two generators. As $P$ is generated by two
elements it is realizable over $\mQ_p$. Since $P$ is a wreath
product of abelian groups it has a generic extension over $K$ and
hence by \cite{Sal1} there is a $P$-extension $L/K$ for which
$\Gal(L_{\nu_i}/K_{\nu_i})=P$ for $i=1,2$. Let us choose $M=L^G$
the $G$-fixed subfield of $L$.

Then clearly $L/M$ is an $M$-adequate extension which is defined
over $K$ since \begin{equation*} \Gal(L/K)\cong \Gal(M/K)\ltimes
\Gal(L/M). \end{equation*} The subfield $L_0=L^{\sg{x}}$ is
$K$-adequate since $\dimcol{(L_0)_{\nu_i}}{K_{\nu_i}}=p^p$ for $i=1,2$ and
hence $(K,M,G)$ satisfies Condition \Caseviii.

(We write $(L_0)_{\nu_i}$ even though $L_0/K$ is not Galois, since
$\nu_i$ has a unique prime divisor in $L_0$ for $i=1,2$.)

Now since $G$ is an abelian group of rank $p>2$, $G$ is not
realizable over $K_{\nu_1},K_{\nu_2}\cong \mQ_p$ and hence not
$K$-admissible. It follows that $(K,M,G)$ does not satisfy
Condition \Casevii. In order for $(K,M,G)$ to satisfy Condition
$\Caseiii$ there should be a $G$-extension $L_0/K$ for which
$L_0M$ is $M$-adequate. In particular,
$\Gal((L_0M)_{\nu_1}/M_{\nu_1})\cong G$ and hence
$\Gal((L_0)_{\nu_1}/K_{\nu_1})\cong G$ which contradicts the fact
that $G$ is not realizable over $K_{\nu_1}\cong \mQ_p$. Thus
$(K,M,G)$ does not satisfy Condition \Caseiii\ either.

By \Rref{normal_case2.rem}, as $M/K$ is Galois, ${\Caseviii}
\Rightarrow \Caseii \Rightarrow\Casenor$ and hence $(K,M,G)$ also
satisfies \Casenor.
\end{exam}

\begin{exam}[$\Caseiii\not\Rightarrow \Casevii$,
$\Caseiii\not\Rightarrow \Caseviii$] Let $p\equiv 1 \pmod{4}$ and
$\nu$ be its unique prime divisor in $K=\mQ(\sqrt{p})$. Let
$M=\mQ(\sqrt{p},i)$ and $G=(\Z/p\Z)^3$.

By the Grunwald-Wang Theorem, there is a Galois $G$-extension
$L_0/K$ for which $\Gal((L_0)_\nu/K_\nu)=G$. Thus $L=L_0M$ is a
Galois $G$-extension of $M$ such that
$\Gal(L_{\nu_i}/M_{\nu_i})=G$ for each of the two prime divisors
$\nu_1,\nu_2$ of $\nu$ in $M$. It follows that $L$ is $M$-adequate
and $(K,M,G)$ satisfies Condition $\Caseiii$. But as $p$ has a
unique prime divisor in $K$ and $G$ is not metacyclic, $G$ is not
$K$-admissible and hence $(K,M,G)$ does not satisfy Condition
$\Casevii$.

Let us also show that $(K,M,G)$ does not satisfy Condition
$\Caseviii$. Assume, on the contrary, that $(L_0,L,D_0,D)$
realizes $\Caseviii$. Then, as $L_0$ is $K$-adequate there are two
primes $\varpi_1,\varpi_2$ of $K$ for which
$\dimcol{(L_0)_{\varpi_i}}{K_{\varpi_i}}=\card{G}$. Without loss
of generality we assume $\varpi_1\not=v$ (otherwise take
$\varpi_2$). Then $\Gal(L_{\varpi_1}/M_{\varpi_1})\cong G$ since
$(L_0)_{\varpi_1}\cap M_{\varpi_1}=K_{\varpi_1}$. This is a
contradiction since tamely ramified extensions (such as
$L_{\varpi_1}/M_{\varpi_1}$) have metacyclic Galois groups. Thus
$(K,M,G)$ does not satisfy Condition $\Caseviii$.
\begin{rem}
Let us also show that $(K,M,G)$ does not satisfy $\Caseiv$ (so
that this example will also show that $\Caseiii\not\Rightarrow
\Caseiv$). Assume on the contrary that there is a $(L,D_0,D)$ that realizes $\Caseiv$. Since $D$ contains $L$ as a maximal
subfield, $\Gal(L_{\nu_i}/M_{\nu_i})=G$ and
$\inv_{\nu_i}(D)=\frac{m_i}{p^3}$ where $(m_i,p)=1$, for $i=1,2$.
Note that $G$ is realizable over $M_\nu$ only for divisors $\nu$
of $p$, so that $\inv_u(D)=\frac{m_u}{p^2}$ for suitable $m_u \in
\Z$ for any $u\not=\nu_1,\nu_2$. Now, since $D$ is in the image of
the restriction, we have $m_1=m_2$. The sum of $M$-invariants of
$D$ is an integer and hence $p \divides m_1+m_2=2m_1$ which
contradicts $(m_i,p)=1$.
\end{rem}

\end{exam}

\begin{exam}[$\Caseiv\not\Rightarrow \Casevii$]
Let $p$ be any odd prime, and $q$ a prime $\equiv 1 \pmod{p}$. Let
$K=\mQ(\sqrt{p})$, so that $q$ splits (completely) in $K$. Let
$\nu$ be the prime divisor of $p$ in $K$ and $\varpi$ a prime
divisor of $q$ in $K$. Let $M$ be a $\Z/p\Z$-extension of $K$ in
which $\nu$ splits completely and $\varpi$ is inert. Let
$G=(\Z/p\Z)^3$.

Consider the $K$-division algebra $D_0$ whose invariants are:
\begin{equation*}
\inv_\nu(D_0)=\frac{1}{p^3},\quad
\inv_w(D_0)=-\frac{1}{p^3}\end{equation*} and $\inv_u(D_0)=0$ for
any other prime $u$ of $K$. Now $D=D_0\otimes_K M$ has
$M$-invariants $\inv_{\nu_i}(D)=\frac{1}{p^3}$ for
the prime divisors $\nu_1,\nu_2,\ldots,\nu_p$ of $\nu$ in $M$,
$\inv_{\varpi'}(D)=-\frac{1}{p^2}$ for the prime divisor $\varpi'$ of $\varpi$ and
$\inv_u(D)=0$ for any other prime $u$ of $M$. Note that $G$ is
realizable over $M_{\nu_i}\cong K_\nu$ and since $q\equiv 1 \pmod{p}$, $(\Z/p\Z)^2$ is
realizable over $M_{\varpi'}$. By the Grunwald-Wang Theorem, there is a Galois
$G$-extension $L/M$ for which:
\begin{center}
$\Gal(L_{\nu_i}/M_{\nu_i})=G$\ for\ $i=1,\ldots,p$,\ and\ $\Gal(L_{\varpi'}/M_{\varpi'})=(\Z/p\Z)^2.$
\end{center}

Thus $L$ is a maximal subfield of $D$ and $(K,M,G)$ satisfies
condition $\Caseiv$. Since $p$ has a unique prime divisor in $K$ and $G$ is not
metacyclic we deduce $G$ is not $K$-admissible and hence $(K,M,G)$ does not satisfy Condition
$\Casevii$.
\end{exam}

\begin{exam}[$\Casenor\not\Rightarrow \Caseiv, \Casevi\not\Rightarrow \Caseiv$]\label{E5.7}
Let $p\geq 13$ be a prime such that $p\equiv 1 \pmod{4}$. Let
$K=\mQ(\mu_p)$ and $M=\mQ(\mu_{4p^2})=\mQ(i,\mu_{p^2})$. Let $G$
be the following metacyclic group of order $p^3$:
\begin{equation}\label{thisG}
G=\sg{x,y \subjectto x^{p}=y^{p^2}=1,\, x^{-1}yx=y^{p+1}}.
\end{equation}
Note that $p$ splits in $\mQ(i)$ and has exactly two prime
divisors $\nu_1,\nu_2$ in $M$. Let $u$ be the unique prime divisor
of $p$ in $K$.
%

Let us first show that $(K,M,G)$ does not satisfy Condition
$\Caseiv$. As $M$ does not satisfy Liedahl's condition, $G$ is not
realizable over $M_\nu$ for any $\nu\not=\nu_1,\nu_2$. Assume on
the contrary there is an $M$-adequate $G$-extension $L/M$ and an
$M$-division algebra $D$ which is defined over $K$ and has a
maximal subfield $L$. Then necessarily:
$\inv_{\nu_1}(D)=\inv_{\nu_2}(D)=\frac{a}{p^3}$ for some
$(a,p)=1$. But as the sum of invariants of $D$ is $0$ and $G$ is
not realizable over any other $\nu$ we have $p \divides 2a$ or $p
\divides a$. Contradiction.

To prove that $(K,M,G)$ satisfies Condition \Casevi\, we shall need the following lemma:

\begin{lem}\label{4.8}
Let $p\geq 11$ be a prime, $k=\mQ_p(\mu_p)$ and $G$ the group
defined in \emph{\eq{thisG}}. Then, given a $G$-extension $m/k$,
there is a $G$-extension $l/k$ for which $m\cap l=k$.
\end{lem}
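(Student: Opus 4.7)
The plan is to reduce the problem to one about $(\Z/p\Z)^2$-subextensions and then to apply Kummer theory on $V := k^\times/(k^\times)^p$, a space of large $\F_p$-dimension. I would first verify that $\Phi(G) = [G,G] = \langle y^p\rangle$: the commutator $[x,y] = y^{-p}$ generates $[G,G]$, and $\Phi(G) = G^p \cdot [G,G] = [G,G]$ because $x^p = 1$. Hence every proper quotient of $G$ factors through $G/[G,G] \cong (\Z/p\Z)^2$, so for any two $G$-extensions $l, m$ of $k$, the common Galois subextension $l \cap m$ is contained in $l' \cap m'$, where $l' := l^{[G,G]}$ and $m' := m^{[G,G]}$ are the respective $(\Z/p\Z)^2$-subextensions. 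It therefore suffices to produce a $G$-extension $l/k$ with $l' \cap m' = k$.

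Next I would invoke Kummer theory (available because $\mu_p \subset k$) to identify $(\Z/p\Z)^2$-extensions of $k$ with $2$-dimensional $\F_p$-subspaces of $V$, noting that $\dim_{\F_p} V = [k:\mQ_p] + 2 = p+1 \geq 12$. Let $W_m$ correspond to $m'$. For an ordered basis $(a,b)$ of a $2$-dim subspace $W \subset V$, with $b$ playing the role of the generator that lifts to an element of order $p^2$, the obstruction to lifting the surjection $G_k \twoheadrightarrow (\Z/p\Z)^2$ attached to $W$ to a surjection $G_k \to G$ lies in $H^2(G_k, \F_p) \cong \F_p$ and is of the form $(a,b)_p + \beta(b)$, where $(\cdot,\cdot)_p$ is the Hilbert symbol and $\beta\co V \to \F_p$ is the Bockstein detecting whether $k(\sqrt[p]{b})$ embeds in a $\Z/p^2\Z$-extension of $k$. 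Any lift is automatically surjective onto $G$, since $G$ does not split over its center $\langle y^p\rangle$ (else it would be abelian).

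To construct the pair $(a,b)$, I would first note that by local class field theory $G_k(p)^{\ab} \cong \Z_p^p \oplus \Z/p\Z$, so some $\Z/p\Z$-extension of $k$ fails to lift to $\Z/p^2\Z$, which forces $\beta$ to be a nonzero linear form on $V$ with $\dim_{\F_p}\ker\beta = p$. Then I would pick $b \in V \setminus (W_m \cup \ker\beta)$, which is possible since $\dim V = p+1$ exceeds $\max(\dim W_m,\,\dim\ker\beta) = p$. The affine hyperplane $H_b := \{a \in V : (a,b)_p = -\beta(b)\}$ has $\F_p$-dimension $p \geq 10$, while $\langle b\rangle + W_m$ has dimension at most $3$, so I can select $a \in H_b$ outside $\langle b\rangle + W_m$. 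The resulting $W := \langle a, b\rangle$ is then $2$-dimensional, liftable to a $G$-extension $l/k$, and satisfies $W \cap W_m = 0$, yielding $l' \cap m' = k$ as required. The main obstacle will be deriving the explicit form $(a,b)_p + \beta(b)$ of the obstruction class; once this is established, the dimension room afforded by $p \geq 11$ makes the remaining combinatorics straightforward.
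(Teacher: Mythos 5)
Your proof is correct, but it takes a genuinely different route from the paper's. Both arguments begin the same way, reducing the problem to the abelianized level: since every proper quotient of $G$ is abelian, it suffices to make the $(\Z/p\Z)^2$-subextensions $l'=l^{[G,G]}$ and $m'=m^{[G,G]}$ linearly disjoint (your blanket claim ``$l\cap m\subseteq l'\cap m'$'' fails only when $l=m$, a case that is excluded anyway once $l'\cap m'=k$, so the sufficiency claim stands). From there the paper argues structurally: it chooses a $(\Z/p\Z)^2$-extension disjoint from $m'$ whose defining epimorphism factors through a free pro-$p$ quotient of $G_k(p)$ of rank $\frac{p-1}{2}$, and then solves the embedding problem for $G\to(\Z/p\Z)^2$ properly because free pro-$p$ groups have cohomological dimension one and the rank is large enough for a surjective solution; this implicitly uses the Demushkin structure of $G_k(p)$. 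You instead stay inside Kummer theory and explicitly kill the obstruction in $H^2(G_k,\F_p)\cong\F_p$: the extension class of $1\to\langle y^p\rangle\to G\to(\Z/p\Z)^2\to1$ decomposes as a Bockstein plus a cup product, so the obstruction becomes the linear condition $(a,b)_p+\beta(b)=0$ in $a$, and nondegeneracy of the Hilbert pairing plus the dimension count $\dim_{\F_p}V=p+1$ gives you room to choose $W=\langle a,b\rangle$ with $W\cap W_m=0$; surjectivity of any solution is automatic since the kernel lies in $\Phi(G)$, exactly as you say. The step you flag as the main obstacle (the explicit form of the obstruction) is classical: it is the standard decomposition of $H^2((\Z/p\Z)^2,\F_p)$ into Bocksteins and a cup product, combined with the identification of cup products with Hilbert symbols under Kummer theory (and $\beta$ is linear, equal to cup product with the class of a primitive $p$-th root of unity), so nothing essential is missing; sign conventions are immaterial since you solve a linear equation. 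What your approach buys is explicitness and independence from the Demushkin/free-quotient structure theory, and it in fact works for smaller primes (any $p\geq 5$, since you only need $\dim V=p+1$ to exceed the small subspaces involved), whereas the paper's argument is shorter once one grants the free pro-$p$ quotient of rank $\frac{p-1}{2}$ and the standard properness criterion for embedding problems over free pro-$p$ groups. The auxiliary condition $b\notin\ker\beta$ in your construction is harmless but not actually needed.
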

\begin{proof}
For any $G$-extension $l/k$ we note that $\Gal(l\cap m/k)$ is an
epimorphic image of $G$ and as such it is either $G$ or an abelian
group. Thus if $l$ intersects with $m$ non-trivially then it also
intersects with $m'=m^{\langle y^p\rangle}$ (the fixed field of
$y^p$ which also corresponds to the abelianization of $G$). We
note that $\Gal(m'/k)=(\Z/p\Z)\times (\Z/p\Z)$. The maximal
abelian group realizable over $k$ is of rank $p-1$, and since
$\frac{p-1}{2} \geq 4$ there is a $(\Z/p\Z)^2$-extension $l'/k$
which is disjoint from $m'$ and for which the epimorphism
$\pi:G_k\ra \Gal(l'/k)$ splits through a free pro-$p$ group of
rank $\frac{p-1}{2}$. Thus $l'$ is disjoint from $m'$ and hence to
$m$. Embedding $l'$ into a $G$-extension produces a $G$-extension
which is disjoint to $m$. This is possible since the following
embedding problem for $G_k$:
\begin{equation*}
\xymatrix{    & {G}_{k} \ar[d] \ar@{-->}[ddl]& \\
                                & F_p(\frac{p-1}{2}) \ar[d] \ar@{-->}[dl] \\
                              G \ar[r]& (\Z/p\Z)^2 \ar[r] & 0, } \end{equation*}
splits through a free pro-$p$ group of large enough rank and hence has a surjective solution.
\end{proof}

Let us prove Condition \Casevi \, is satisfied. Let
$\sigma_{p+1}\in \Gal(\mQ(\mu_{p^2})/\mQ)$ be the automorphism
that sends $\sigma_{p+1}(\zeta)=\zeta^{p+1}$ where $\zeta$ is a
primitive root of unity of order $p^2$. Thus $\sigma_{p+1}$ fixes
$\mu_p$ and hence $\sigma_{p+1}\in \Gal(\mQ(\mu_{p^2})/K)$. As $G$
satisfies Liedahl's condition over $K$, $G$ is realizable over
infinitely many primes of $K$ (see the proof of \cite[Theorem
29]{Lid2} or \cite[Theorem 2.3.1]{Nef}), so choose one such prime
$\varpi$ which is not a divisor of $p$. Since
$\dimcol{K_u}{\mQ_p}=p-1\geq 11$, it follows from \Lref{4.8} that
$G$ is also realizable over $K_u$ and furthermore
 there is a $G$-extension $L_0^p/K_u$ for which
$M_u\cap L_0^p=K_u$. 

%


By Theorems $6.4(b)$ and  $2.5$ of \cite{Neu2} (see also  \cite[Proposition 1.2.13]{Nef}), there is a $G$-extension $L_0/K$ for
which $\Gal((L_0)_w/K_w)=G$ and $(L_0)_u=L_0^p$. Hence $L_0$ is
$K$-adequate. Let $L=L_0M$. As $M_u\cap L_0^p=K_u$ we have
$\Gal(L_{\nu_i}/M_{\nu_i})=G$ for $i=1,2$. Thus $L/M$ is an
$M$-adequate $G$-extension and $(K,M,G)$ satisfies Condition
\Casevi.

By \Rref{normal_case2.rem}, as $M/K$ is
Galois, $\Casevi\Rightarrow\Casenor$. Thus,  $(K,M,G)$ also
satisfies \Casenor. This concludes the proof of \Eref{E5.7}.
\end{exam}

\iffurther
\section{Further ideas}

One natural situation is missing: require that the $M$-adequate field $L$ be Galois over $K$.

\fi 

\newcommand\arxiv[1]{{\texttt{#1}}}

\end{document}